\documentclass[10]{article}
\usepackage{geometry}[a4wide]
\usepackage{amssymb} 
\usepackage{latexsym} 
\usepackage{amsmath,empheq}
\usepackage{mathtools} 
\usepackage{thmtools}
\usepackage{tabls} 
\usepackage{graphicx}
\usepackage{todonotes}
\usepackage{overpic}
\usepackage{subcaption}
\usepackage{dutchcal}
\usepackage{bbm}
\usepackage[bottom]{footmisc}
\usepackage[english]{babel}
\usepackage{centernot}
\usepackage{stmaryrd}
\usepackage[utf8]{inputenc}
\usepackage{color}
\usepackage{epsfig}
\usepackage{graphicx}
\usepackage[all]{xy}
\input xy
\usepackage[mathscr]{eucal} %use \mathsrc{X} to type eucal X
\usepackage{enumerate}
\usepackage{enumitem}
\usepackage{accents}
\newtheorem{theorem}{Theorem}

\newtheorem{remark}[theorem]{Remark}

\newtheorem{lemma}{Lemma}

\newenvironment{proof}{\noindent{\bf Proof.}}%
{\hspace*{\fill}$\Box$\par\vspace{4mm}}

\newenvironment{claimproof}[1]{\par\noindent\emph{Proof of}\space\textbf{#1.}}{\hfill $\checkmark$\par\vspace{.5mm}}

\newenvironment{theoremproof}[1]{\par\noindent\textbf{Proof of Theorem}\space\textbf{#1.}\!}{\hfill$\Box$\mybreak}

\def\cadre{$$\vcenter\bgroup\advance\hsize by -2em\noindent
	\refstepcounter{equation}(\theequation)~\ignorespaces}
\makeatletter
\def\endcadre{\egroup\eqno$$\global\@ignoretrue}

\newcommand{\mybreak} {\par\vspace{2mm}\noindent}
\newcommand\mydfrac[2]{{\displaystyle\frac{#1}{#2}}}

\makeatletter
\def\imod#1{\allowbreak\mkern10mu({\operator@font mod}\,\,#1)}
\makeatother

\newcommand{\comment}[1]{}

\newcommand{\E} {\mathbb{E}}

\newcommand{\F} {\mathcal{F}}

\newcommand{\prd} {\mathcal{P}}

\newcommand{\Pm} {\mathbb{P}}
\newcommand{\G} {\mathbb{G}}
\newcommand{\GG} {\mathcal{G}}

\newcommand{\Ea}[1] {\E\left(#1\right)}

\newcommand{\lmax} {\lambda_{\max}}
\newcommand{\lmin} {\lambda_{\min}}
\newcommand{\ap}[2] {#1^{(#2)}}
\newcommand{\ma}[1] {{#1}^{(s)}_n}

\newcommand{\mS}[1] {{#1}^{(S)}}

\newcommand{\mt}[3] {{#1}^{(#2)}_{#3}}
\newcommand{\nn}{\widetilde{n}}

\newcommand{\ber}{{\rm ber}}
\newcommand{\bino}{{\rm bin}}
\newcommand{\bin}[2] {\bino(#1,#2)}

\newcommand{\size}[1] {\mathsf{e}(#1)}

\newcommand{\diagg}{\textrm{diag}}
\newcommand{\diago}[1]{\diagg\left(#1\right)}

\usepackage{multicol}
\usepackage{tikz}
\usepackage{pgfplots}
\usepackage{dsfont}

\usepackage{ulem}
\begin{document}

\title{The Critical Patch Size Problem in Random Graphs} 
\author{
	Nicola Apollonio\thanks{Consiglio Nazionale delle Ricerche, Via dei Taurini 19, 00185 Roma, Italy.
		e-mail: \tt{nicola.apollonio@cnr.it}} 
	\and
	Veronica Tora\thanks{Consiglio Nazionale delle Ricerche, Via dei Taurini 19, 00185 Roma, Italy.
		e-mail: \tt{v.tora@iac.cnr,it, veronica.tora2@unibo.it}} 
	\and
	Davide Vergni\thanks{Consiglio Nazionale delle Ricerche, Via dei Taurini 19, 00185 Roma, Italy.
		e-mail: \tt{davide.vergni@cnr.it}}}

\maketitle

\begin{abstract}
The problem of {\it critical patch size} --- a threshold condition for
population persistence --- is investigated in the context of discrete
habitats, modeled as graphs with a distinguished subset of vertices 
acting as sinks. These sinks impose boundary-like constraints analogous to
Dirichlet conditions in continuous domains. The population proliferates
locally at the vertices and diffuse across the network through the
graph Laplacian. In the sinks the population cannot survive. The
Dirichlet eigenvalue of the habitat is defined as the smallest
eigenvalue of the principal submatrix of the Laplacian obtained by
removing the rows and columns associated with sink vertices. This
spectral parameter governs the habitat's viability: survival occurs
when the Dirichlet eigenvalue of the habitat lies below a critical
reaction-to-diffusion ratio.

We study survival conditions for a sequence of random habitats built on
binomial random graphs. We establish a law of large numbers for the
corresponding sequence of Dirichlet eigenvalues and prove the
emergence of a sharp threshold phenomenon: with high probability, a
large random habitat is either viable or non-viable, depending on
whether the reaction-to-diffusion ratio lies below or above this
threshold. Our results provide the first general spectral theory for
critical patch size on graphs, with implications for ecology,
synthetic biology, and the modeling of processes on brain connectomes.
\end{abstract}

{\small\textbf{Keywords}: Graph Laplacian, Dirichlet eigenvalue, Random graphs, 
Chernoff Bounds}

\section{Introduction}\label{sec:intro}

Population survival in spatially structured environments can be
formulated as a threshold problem: given local proliferation and
spatial dispersal, under which conditions does a population persist
rather than go extinct? A central question is therefore how spatial
structure and boundary effects jointly determine persistence
thresholds.

In classical reaction-diffusion models, this problem reduces to 
a spectral condition. For a species density $u:[0,\infty)\times H\rightarrow [0,1]$,
diffusing in a bounded habitat $H \subset R^d$ and evolving with a logistic
growth term according to
\begin{equation}
\partial_t u = D \nabla^2 u + ru(1-u),\qquad u|_{\partial H} = 0 
\label{eq:kiss}
\end{equation}
persistence occurs if and only if the principal Dirichlet eigenvalue
of the problem
\begin{equation}
\left(\nabla^2 u + \lambda u\right)|_H=0,\qquad  u|_{\partial H} = 0 
\label{eq:nablaspectra}
\end{equation}
satisfies $\ap{\lmin}{\partial H}\leq \rho(H)$, with $\rho(H)=r/D$,
defines the survival condition for the habitat $H$.  Note that we
treat $\rho(\cdot)$ as function of $H$ to allow for general dependency
of the parameters $r$ and $D$ on $H$. Equation~\eqref{eq:kiss} is a
simple extension of the original KiSS model, whose principal Dirichlet
eigenvalue, in a one dimensional island of size $H=[0,L]$, is given by
$\ap{\lmin}{\partial H}=\pi^2/L^2$, resulting in the well known KiSS
critical patch size $L=\pi/\sqrt{\rho}$.

Model~\eqref{eq:kiss} has been extended in multiple directions,
including metapopulation dynamics~\cite{levins1969some}, demographic
and environmental stochasticity~\cite{lande1993risks}, spatial
heterogeneity~\cite{cantrell1991effects}, and individual-based
frameworks~\cite{berti2015extinction}. However, limited attention has
been devoted to understanding how strong environmental heterogeneity
and complex connectivity structures influence population survival and
the determination of critical patch size in biological systems.
Although ecological dynamics on networks have recently received
growing attention, as in \cite{mambuca2022dynamical, lucke2023large},
to the best of our knowledge, the first attempt to discuss survival
conditions in discrete habitats, modeled as graphs, was introduced
in~\cite{tora2025critical}. However, a general spectral
characterization of survival thresholds in strongly stochastic 
network-structured habitats is still lacking.

In the following we refer to a discrete setting in which habitats are
represented as random graphs to which the spectral viewpoint discussed
above naturally extends. Let $\{H_n\}$ be a sequence of habitats
where $H_n=(G_n, S_n)$ consist of a binomial random graph $G_n$ and a
set $S_n$ of $s_n$ sink nodes. Denote by $L(G_n)$ the graph Laplacian
and by $\ap{L}{S_n}(G_n)$ the principal submatrix of $L(G_n)$ obtained by
removing rows and colums corresponding to sinks $S_n$.  In the present
work, we define the critical habitat size as the smallest real number
$\gamma$ such that, for $n\geq\gamma$, the habitat $H_n$ ensures
population survival, and we provide probabilistic bounds on it (see
Theorem~\ref{thm:appl}).  We investigate the asymptotic properties of
the inequality
\begin{equation}
\lambda_{\min}\big(\ap{L}{S_n}(G_n)\big)\leq\rho(H_n),
\end{equation} 
which, in reaction-diffusion models, characterizes population
persistence.  Some examples of deterministic and stochastic graphs
have been investigated in~\cite{tora2025critical}, but the fully
stochastic case remains largely unexplored.  We show that, in large
random habitats, population survival is asymptotically determined by
the expected number of connections between viable sites and sinks.
Formally, our main result establishes a law of large numbers for the
Dirichlet eigenvalue in the regime $s_np_n\gg \log n.$ Under this
assumption we prove that $$\lmin\big(\ap{L_n}{S_n}\big)/s_np_n
\to 1 \qquad
\mbox{almost surely}.$$ 
In particular, when $p_n\equiv p>0$ and
$s_n\geq(\log{n})^{1+\epsilon}$, the principal Dirichlet eigenvalue
concentrates around $s_np$ for any $\epsilon>0$.  
This shows that, asymptotically, the survival threshold is governed by
the expected boundary degree induced by the sinks.

\section{Survival in random graphs with sinks}\label{sec:preliminaries}
Let us briefly describe the theoretical and notational settings we work in. 
We use the symbol $\#$ to denote the cardinality of sets. As customary, 
for a positive integer $n$, we set $[n]=\{1,\ldots,n\}$. If $B$ is an order 
$n$ matrix and $S\subseteq [n]$, $\mS{B}$ is the (principal) submatrix 
obtained by deleting rows and columns whose index is in $S$. 

We consider undirected graphs on the vertex set $[n]$ and we define
$\mathcal{G}_n$ as the set of all such graphs. For a graph $G$ and
distinct vertices $i,\,j\in [n]$, we write $i\sim j$ to mean that $i$
and $j$ are adjacent in $G$. Notice that $\sim$ defines a symmetric
relation on $[n]$. The graph Laplacian, (or simply the Laplacian) 
$L(G)$ of $G$ is the self-adjoint matrix $D(G)-A(G)$ where 
$A(G)=(a_{i,j})$ is the adjacency matrix of $G$
\[a_{i,j}=\begin{cases}
	1 & \text{if $i\sim j$}\\
	0 & \text{otherwise},
\end{cases}         
\]
while $D(G)=\diago{d_G(1),\ldots,d_G(n)}$ is the degree-matrix of $G$,
where $d_G(i)=\sum_{j=1}^na_{i,j}$. If $S$ is any subset of $[n]$ and
$i$ is any vertex in $[n]\setminus S$, one has the following
decomposition
\begin{equation}\label{eq:degree_dec}
	d_G(i)=\widetilde{d}(i)+z_i(G),\quad 
\end{equation}
where $z_i(G)=\#\{j\in S: i\sim j\}$ and $\widetilde{d}_G(i)=\#\{j\in
[n]\setminus S: i\sim j\}$. Hence $z_i(G)$ is the number of neighbors of
$i$ in $S$ while $\widetilde{d}_G(i)$ is the degree of vertex $i$ in the
subgraph $\widetilde{G}$ induced by $[n]\setminus S$. Clearly $\widetilde{d}_G=d_{\widetilde{G}}$. For ease of notation, throughout the rest of the paper, when no ambiguity can arise, we will omit the explicit reference to $G$ in all the above-defined symbols. A habitat
is a pair $(G,S)$, where $G\in \mathcal{G}_n$ and $S\subseteq [n]$ is
a set of $s$ vertices thought of as sinks (or boundary points). 

Using the above definitions, the diffusion dynamics in the habitat $(G,S)$,
near the extinction threshold, can be described as
\begin{equation}\label{eq:graphkiss}
	\partial_t {\mathbf{u}} + L(G) \mathbf{u} = \rho\,\textsf{id}\mathbf{u}\qquad u_{i}|_{i\in S} = 0\,.
\end{equation}

Defining $\mS{A}(G)$ as the adjacency matrix of the subgraph
$\widetilde{G}$ of $G$ induced by $[n]\setminus S$ and $\mS{L}(G)$
the Dirichlet matrix associate to the habitat $(G,S)$, the
following identities hold
\begin{equation}\label{eq:identity}
\mS{L}(G)=\mS{D}(G)-\mS{A}(G)=Z+\mS{\widetilde{L}}(G)
\end{equation}
where $Z$ is the diagonal matrix whose diagonal elements are the
$z_i$'s, $i\in [n]\setminus S$, defined in \eqref{eq:degree_dec} while $\mS{\widetilde{L}}(G)$
is the Laplacian of $\widetilde{G}$. The above expression implies
that, in general, $\mS{L}(G)$ is not itself a graph Laplacian.

Dynamics \eqref{eq:graphkiss} can be written as
\begin{equation}\label{eq:graphkissNOSINK}
	\partial_t {\mathbf{u}} + \mS{L}(G) \mathbf{u} = \rho\,\textsf{id}\mathbf{u}
\end{equation}
and, considering that Dirichlet eigenvalue of $(G,S)$, denoted by
$\ap{\xi}{S}(G)=\lambda_{\min}\big(\ap{L}{S}(G)\big)$, i.e. the smallest eigenvalue of $\mS{L}(G)$,
survival condition for \eqref{eq:graphkiss} and \eqref{eq:graphkiss} is
\begin{equation}\label{eq:survivalConditions}
\ap{\xi}{S}(G)\leq\rho.
\end{equation} 

\subsection{Recall on Chernoff Bound}
A binomial random graph $G$ on $[n]$ is a sample point from 
the probability space $\G(n,p_n)=(\mathcal{G}_n,\Pm_{n,p_n})$, where
\begin{equation*}\label{eq:binp}
	\Pm_{n,p_n}(G)=p_n^{\size{G}}(1-p_n)^{{n \choose 2}-\size{G}},
\end{equation*}
where $\size{G}$ is the number of edges of $G$. More pictorially, a binomial random graph is a graph on $[n]$ where each of the ${n \choose 2}$ pairs of different vertices is joined by an edge with the same probability $p_n$, independently of each other pair. We also stipulate that the sinks of a random habitat are always located in the first $s_n$ vertices of the underlying random graph and we call the corresponding habitat, the standard random habitat. This assumption causes no loss of generality because if $(G,S)$ is any habitat where $G\in\mathcal{G}_n$ and $S$ has $s$ sinks, then there is a graph isomorphism $\pi:[n]\rightarrow [n]$ such that $\pi(S)=[s_n]$. Therefore, the spectra of the matrices $\mS{L}$ and $\ap{L_n}{[s_n]}$ are the same, as they are conjugate by a permutation matrix. From now on, we abridge the more precise, albeit cumbersome and redundant, expression $\ap{L_n}{[s_n]}$ by $\ma{L}$. We tacitly apply the same convention to other related notations. In particular, hereafter $\ma{\xi}$ will denote the Dirichlet eigenvalue of a standard random habitat with $s_n$ sinks and $n-s_n$ non-sink vertices.      

The sequences of random objects (random variables, matrices) we are interested in are defined on different probability spaces for different $n$. Using the standard procedure of taking product of probability spaces, we can ensure that all the objects are defined on the same probability space, $(\Omega,\F,\Pm)$. Our forthcoming convergence results are always stated with respect to this probability space. Also, for a sequence of random variable $\{U_n\}$ and a random variable $U$, all being defined on $(\Omega,\F,\Pm)$, we use that shorthand notation $U_n\xrightarrow{a.s.} U$ to mean that $U_n$ converge to $U$ almost surely. We will make an extensive use of the following variants of the so-called Chernoff bounds (see~\cite{frieze2015introduction}, Chapter 27) that for a random variable $X\sim\bin{\nu}{p}$ and $\epsilon>0$ read as 
\begin{alignat*}{3}
	\Pm\left(X \geq (1+\epsilon)\nu p\right)&\leq \phantom{.}e^{-\epsilon^2\nu p/3}&\phantom{-} &\text{Upper tail Chernoff Bound,}\\
	\Pm\left(X \leq (1-\epsilon)\nu p\right)&\leq \phantom{.}e^{-\epsilon^2\nu p/2} &\phantom{-}&\text{Lower tail Chernoff Bound,}\\
	\Pm\left(|X - \mu| \geq \epsilon\nu p\right)&\leq 2e^{-\epsilon^2\nu p/3} &\phantom{-}&\text{Symmetric Chernoff Bound}.
\end{alignat*}

\section{Results}\label{sec:results}
In the present work, we extend this concept to that of critical
habitat size, defined as the smallest real number $\gamma$ such that,
for $n\geq\gamma$, the habitat $H_n$ ensures population survival, and
we provide probabilistic bounds on it (see Theorem~\ref{thm:appl}).
Actually the main aim of this paper is to study inequality 
\begin{equation}\label{eq:fund_in}
\ap{\xi}{S_n}(G_n)\leq\rho(H_n),
\end{equation} 
for a sequence of habitats $\{H_n\}$, $H_n=(G_n,S_n)$, where $G_n$ is a binomial random graph on $\{1,2,\ldots,n\}$, i.e., a graph sampled from the probability space $\G(n,p_n)$ (see Section \ref{sec:results} for precise definition), $S_n$ consists of $s_n$ sinks, the sequence $\{s_n\}$ does not decrease and $\ap{L}{S_n}(G_n)$ is the principal submatrix of the graph Laplacian of $G_n$ induced by the non-sink vertices. By what we have observed and stipulated above, there is no loss of generality in assuming that $H_n$ is a standard random habitat. Consequently, \eqref{eq:fund_in} is equivalent to   
\begin{equation}\label{eq:fund_in_1}
	\ma{\xi}\leq\rho(H_n),
\end{equation}
According to whether it satisfies the survival condition or not, a habitat will be referred to as healthy or deadly. We establish law of large numbers for the Dirichlet eigenvalue of $H_n$ under the assumptions that $s_np_n$ grows faster than logarithmically. In particular, we show that $\ma{\xi}/s_np_n$ converges to 1 almost surely. This happens, for instance, when $p_n$ is a fixed constant and $s_n\geq(\log{n})^{1+\epsilon}$ for any $\epsilon>0$. Specifically, the following result holds, where, for functions $f,\,g: \mathbb{N}\rightarrow \mathbb{N}$, we write $f(n)\ll g(n)$ to mean that $\lim_{n\rightarrow \infty}\frac{f(n)}{g(n)}=0$.

\begin{theorem}\label{thm:main00}
Consider a sequence $\{H_n\}$ of random habitats of the form $(G_n,S_n)$ where $G_n$ is sampled from $\G(n,p_n)$, $S_n$ has $s_n$ sinks and the sequence $\{s_n\}$ is non-decreasing. Let $\big\{\ap{\xi_n}{s_n}\big\}$ be the corresponding sequence of Dirichlet eigenvalues. 
\begin{enumerate}[label={\rm (\roman*)}]
\item\label{com:0i} If $s_n\ll n$, then the sequence $\{p^*_n\}$, $p^*_n=\frac{\log{n}}{n}$, is a sharp threshold for the positiveness of the Dirichlet eigenvalue of $H_n$, i.e.,
$$\lim_{n\rightarrow\infty}\Pm\big(\ap{\xi_n}{s_n}>0\big)=\begin{cases}
	0 & \text{if $\phantom{-}\frac{p_n}{p^*_n}\leq 1-\epsilon$}\\
	1 & \text{if $\phantom{-}\frac{p_n}{p^*_n}\geq 1+\epsilon$}.\\
\end{cases}  
$$
for every $\epsilon>0$.
\item\label{com:0ii} Consider the following assumptions
	\begin{enumerate}[label={\rm (\alph*)}]
		\item\label{com:a} $\log{n}\ll(n-s_n)p_n$.
		\item\label{com:b} $\lim_{n\rightarrow \infty}s_n/n<1$ and $\log{n}\ll s_np_n$.   
	\end{enumerate}
If assumption \ref{com:b} holds, then $\mydfrac{\ap{\xi_n}{s_n}}{s_np_n}$ converges to $1$ almost surely. Assumptions \ref{com:a} and \ref{com:b} both imply that $\Pm\big(\limsup_n\left\{\ap{\xi_n}{s_n}/s_np_n\right\}\leq 1\big)=1$. 
\end{enumerate}
As a consequence, if assumption \ref{com:b} holds, then the sequence $\{\rho^*_n\}$, $\rho^*_n=s_np_n$, is a sharp threshold for habitat's viability ruled by the survival condition \eqref{eq:fund_in}. Under assumption \ref{com:a}, for every $\epsilon>0$, if $\rho(H_n)\geq (1+\epsilon)s_np_n$, then for sufficiently large $n$, almost every habitat with $n$ vertices is healthy. This happens, in particular, if $s_np_n$ converges to zero and $\rho(H_n)$ is bounded away from zero.  
\end{theorem}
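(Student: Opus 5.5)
The proof rests on sandwiching $\ma{\xi}$ between deterministic functionals of the boundary degrees $z_i$, using the decomposition \eqref{eq:identity}, $\ma{L}=Z+\widetilde{L}$ with $\widetilde{L}\succeq 0$.

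\emph{Bounds from \eqref{eq:identity}.} Since $\widetilde{L}$ is positive semidefinite, Weyl's inequality gives the lower bound
\[
\ma{\xi}\geq \min_{i>s_n} z_i .
\]
For an upper bound I use Rayleigh quotients. Testing with a coordinate vector $e_i$ gives $\ma{\xi}\le z_i+\widetilde d(i)=d(i)$. Testing with the all-ones vector $\mathbf{1}$ on $[n]\setminus[s_n]$ kills $\widetilde{L}$, so
\[
\ma{\xi}\leq \frac{1}{n-s_n}\sum_{i>s_n} z_i ,
\]
which is the average boundary degree. Each $z_i\sim\bin{s_n}{p_n}$. The sum $\sum_i z_i$ counts the edges between $[s_n]$ and its complement, so it is $\bin{s_n(n-s_n)}{p_n}$.

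\emph{Part (ii).} For the upper bound, the symmetric Chernoff bound gives
\[
\Pm\Big(\sum_i z_i\ge(1+\epsilon)s_n(n-s_n)p_n\Big)\le e^{-\epsilon^2 s_n(n-s_n)p_n/3}.
\]
Under (a) the exponent is at least $s_n(n-s_n)p_n\gg\log n$. Under (b), $n-s_n\ge cn$ and $s_np_n\gg\log n$, so the exponent is again $\gg\log n$. Hence these probabilities are summable, and Borel--Cantelli yields $\limsup_n \ma{\xi}/s_np_n\le 1+\epsilon$ almost surely for every rational $\epsilon$.

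For the lower bound under (b), the lower-tail Chernoff bound and a union bound over the at most $n$ non-sink vertices give
\[
\Pm\Big(\min_i z_i\le(1-\epsilon)s_np_n\Big)\le n\,e^{-\epsilon^2 s_np_n/2}=e^{\log n-\epsilon^2s_np_n/2}.
\]
Since $s_np_n\ge K\log n$ for any fixed $K$ eventually, this is at most $n^{-2}$ eventually, and Borel--Cantelli applies again. This proves the almost sure convergence of $\ma{\xi}/s_np_n$ to $1$. The threshold statements for $\rho^*_n$ then follow directly: if $\rho(H_n)\ge(1+\epsilon)s_np_n$, the limsup bound gives a healthy habitat for large $n$, and if $\rho(H_n)\le(1-\epsilon)s_np_n$, the liminf bound gives a deadly one.

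\emph{Part (i).} The matrix $\ma{L}$ is a Laplacian plus the nonnegative diagonal $Z$, so
\[
\mathbf{x}^T\ma{L}\mathbf{x}=\sum_{\text{edges inside}}(x_i-x_j)^2+\sum_i z_ix_i^2 .
\]
This quadratic form vanishes for some $\mathbf{x}\ne 0$ iff some connected component of $\widetilde G$ sends no edge into $S$. Equivalently, $\ma{\xi}>0$ iff every vertex of $G$ is connected to $S$.

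If $p_n\le(1-\epsilon)\log n/n$, the graph $G$ has isolated vertices with high probability, by the classical second-moment argument. I would need to show these are found among the non-sinks. Since $s_n\ll n$, the expected number of isolated non-sink vertices is
\[
(n-s_n)(1-p_n)^{n-1}\approx n^{\epsilon}\to\infty,
\]
and the second-moment method gives one with high probability. An isolated non-sink has $z_i=0$ and spans a zero-eigenvalue component, so $\ma{\xi}=0$.

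If $p_n\ge(1+\epsilon)\log n/n$, then $G$ is connected with high probability (the Erd\H{o}s--R\'enyi connectivity threshold). Since $s_n\ge1$, every vertex then reaches $S$, so $\ma{\xi}>0$.

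\emph{Main obstacle.} The delicate point is the lower bound on $\min_i z_i$: the union bound requires exactly $s_np_n\gg\log n$, which is why only assumption (b) yields convergence. A secondary subtlety is in part (i), where the second-moment computation must be restricted to non-sink vertices; this is where $s_n\ll n$ enters.
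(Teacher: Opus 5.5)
Your proposal is correct. For part (ii) and for the supercritical half of part (i), it is the paper's own argument:
\begin{itemize}
\item the sandwich $\min_i z_i\le\ma{\xi}\le\frac{1}{n-s_n}\sum_i z_i$ obtained from \eqref{eq:identity} (Lemma~\ref{lem:easy});
\item Chernoff bounds, a union bound over the non-sinks, and Borel--Cantelli (Lemma~\ref{lem:degree_as});
\item the connectivity threshold of $\G(n,p_n)$.
\end{itemize}

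The one genuine divergence is the subcritical half of (i), and there your version is the sound one. The paper takes an isolated vertex of the non-sink graph $\widetilde G$ and asserts that $\ma{L}$ then has an all-zero row. That row still carries the diagonal entry $z_i$, which need not vanish. For instance, with $s_n=n/\log\log n$ and $p_n=(1-\epsilon)\log n/n$ one has $s_np_n\to\infty$, so such a vertex has $z_i>0$ with probability tending to one.

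Your kernel characterization pinpoints what is actually needed: $\ma{\xi}=0$ iff some component of $\widetilde G$ sends no edge into $S$. Your count of the vertices of $[n]\setminus[s_n]$ that are isolated in $G$ itself then supplies it, since
$\E X=(n-s_n)(1-p_n)^{n-1}\ge(1-o(1))\,n^{\epsilon}\to\infty$ and $\mathrm{Var}(X)/(\E X)^2\le 1/\E X+p_n/(1-p_n)\to 0$. This step is also where $s_n\ll n$ is used.

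A cosmetic point: the bound in your upper-bound step for (ii) is the upper-tail Chernoff bound, not the symmetric one. Its $\epsilon^2$ form requires $\epsilon\le 1$, which is harmless since you only need small rational $\epsilon$.
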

\begin{figure}[!h]
	\includegraphics[scale=0.90]{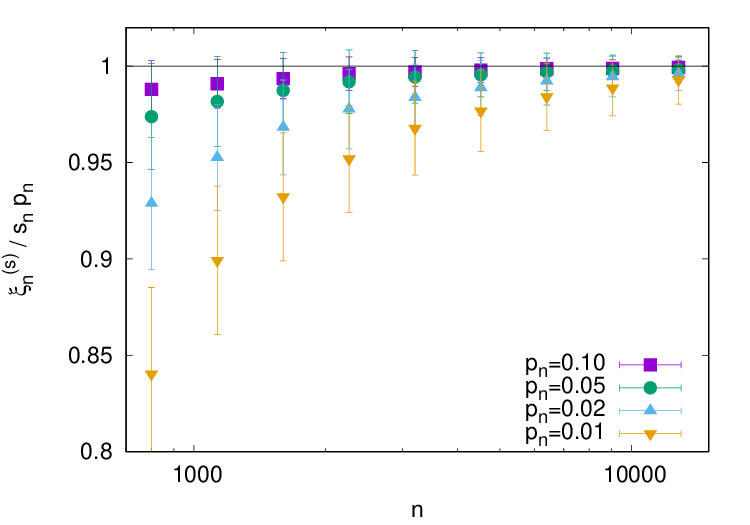}
	\caption{Numerical computation of $\ma{\xi}/s_np_n$ vs $n$ for
	fixed $s_n=50$ and different values of $p_n$.  The plot shows
	the mean and the standard deviation of $\ma{\xi}/s_np_n$
	computed over 1000 random habitats.}  \label{fig:1}
\end{figure}
If the sequence $\{p_n\}$ is such that, eventually, $p_n\geq
C\log{n}/n$ for some constant $C>1$, then it is customary to say that
random graph evolves in the connected regime while, if
$np_n\gg\log{n}$, then we may say that the random graph evolves in the
quasi-dense regime. Consequently, statement \ref{com:0i} in
Theorem \ref{thm:main00} establishes that a non-zero limit for
$\ap{\xi_n}{s_n}$ can exist only in the connected regime while the
subsequent statement \ref{com:0ii} identifies this limit in the
quasi-dense regime under assumption \ref{com:b}. Note that assumptions
\ref{com:a} and \ref{com:b} are independent of each other and both
imply that the random graph evolves in the quasi-dense regime. In such
a regime, the condition $s_np_n\rightarrow 0$, occurring in the last
part of Theorem \ref{thm:main00}, can be realized with
$p_n=\log^a{n}/n$, for some $a>1$, and with $s_n$ either constant or
growing with $n$. Actually, as explained in Remark~\ref{rem:cavolo},
it is not difficult to prove that, almost surely, for $n$ large
enough, $s_np_n-\kappa(n)\leq \ap{\xi_n}{s_n}\leq s_np_n$, where
$k(n)$ is $o(np_n)$ (see Figure \ref{fig:1}). Also, it is worth
observing, that $s_np_n$ is the smallest eigenvalue of the expectation
of the random matrix $\ap{L_n}{S_n}$ rather then than expected value
of the random variable
$\ap{\xi_n}{s_n}=\lmin\big(\ap{L_n}{S_n}\big)$. As explained in
Remark~\ref{rem:expected}, one has $\E\left(\ap{\xi_n}{s_n}\right)\leq
s_np_n$.

We warn the reader that, by combining Oliveira’s matrix concentration for random Laplacians (see \cite{oliveira2009concentration}) with the aforementioned identity $s_np_n=\lmin\big(\E\big(\ma{L}\big)\big)$ (see Remark \ref{rem:expected}), one can prove the same asymptotic behaviour of the Dirichlet eigenvalue in the dense regime. In fact, Theorem \ref{thm:main00}.\ref{com:0ii} can be recovered from these general results combined with the deterministic bounds of Lemma \ref{lem:easy}. Since this route requires substantially heavier machinery and does not simplify the argument, for the sake of clarity and self‑containment, we prefer to derive Theorem \ref{thm:main00}.\ref{com:0ii} directly from elementary first principles, using only Chernoff bounds and the Borel–Cantelli lemma. While this avoids resorting the machinery of the general theory of random graphs, it highlights the essential mechanism behind the behaviour of  $\ma{\xi}$.

Using the fact that the survival condition defines a monotone graph
property (see the proof of Theorem \ref{thm:appl}), we obtain
probabilistic bounds on the minimum size of healthy habitats, i.e.,
the critical habitat size. Specifically, when habitats are sampled
uniformly at random from the set of all possible habitats with a given
number of non-sink vertices and, possibly, a given number of edges, we
have the following `probabilistic solution' to the critical habitat size
problem in model \eqref{eq:graphkissNOSINK}:
\begin{theorem}\label{thm:appl}
Let $s$ be a positive integer number, $\rho$ a positive real number 
and $\delta$ be a real number arbitrarily picked in $(0,1)$.
\begin{enumerate}[label={\rm (\roman*)}]
	\item\label{com:ia} For any positive real number $\epsilon$ let $p_\epsilon:=p_\epsilon(\rho,s)$ where $p_\epsilon(\rho,s)=\frac{\rho}{(1+\epsilon)s}$. Set $\mu=sp_\epsilon$. If $n$ and $m$ are positive integer numbers such 
		\begin{equation}\label{eq:cps}
		n\geq s+\mydfrac{3\mu}{(\rho-\mu)^2}\log{\frac{4}{\delta}}\quad\text{and}\quad m\leq p_\epsilon {n \choose 2},
	\end{equation}
	then all but at most fraction $\delta$ of the habitats with $s$ sinks whose underlying graph has $n$ vertices and $m$ edges are healthy 
	\item\label{com:iia} Except possibly for a fraction $\delta$, if $\rho\geq s/2$, then all habitats with $s$ sinks and at least $\frac{6s}{(s-2\rho)^2}\log{\frac{1}{\delta}}$ non-sink vertices are healthy, while, if $\rho\leq s/2$ all habitats with $s$ sinks and no more than $\delta e^{(s-2\rho)^2/4s}$ non-sink vertices are deadly.
\end{enumerate}
\end{theorem}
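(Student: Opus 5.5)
The plan is to sandwich the Dirichlet eigenvalue between two elementary statistics of the boundary degrees $z_i$, $i\in[n]\setminus S$, and then control those statistics with the Chernoff bounds recalled above.

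\emph{Deterministic sandwich.} By \eqref{eq:identity}, $\mS{L}=Z+\mS{\widetilde L}$ with $\mS{\widetilde L}$ positive semidefinite.
\begin{itemize}
\item Lower bound: $\mS{\xi}\geq \lmin(Z)=\min_i z_i$.
\item Upper bound: use the Rayleigh quotient with the all-ones vector $\mathbf 1$ on the non-sink vertices. Since $\mS{\widetilde L}\mathbf 1=0$, this gives $\mS{\xi}\leq \frac{1}{n-s}\sum_i z_i$.
\end{itemize}
Hence a habitat is healthy as soon as the average boundary degree is at most $\rho$. It is deadly as soon as $\min_i z_i>\rho$. In $\G(n,p)$, $\sum_i z_i\sim\bin{s(n-s)}{p}$ and each $z_i\sim\bin{s}{p}$.

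\emph{Part (i).} Uniform habitats with $n$ vertices and $m$ edges are exactly $\G(n,m)$, so the task is to pass from $\G(n,m)$ to $\G(n,p_\epsilon)$. Adding an edge increases $\mS{L}$ in the positive semidefinite order, so $\mS{\xi}$ is monotone non-decreasing in the edge set and ``deadly'' is a monotone increasing property. Conditioned on having $e$ edges, $\G(n,p)$ is $\G(n,e)$. Therefore, by monotonicity,
\[
\Pm_{n,p_\epsilon}(\text{deadly})\ \geq\ \Pm(e(G)\geq m)\,\Pm_{n,m}(\text{deadly}).
\]
Since $m\leq p_\epsilon\binom n2$ and the median of a binomial is at least $\lfloor Np\rfloor$, we have $\Pm(e(G)\geq m)\geq 1/2$. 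It thus suffices to show $\Pm_{n,p_\epsilon}(\text{deadly})\leq \delta/4$; the slack relative to $\delta/2$ covers rounding issues with the median.

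Deadly implies $\sum_i z_i>\rho(n-s)=(1+\epsilon)\,\mu(n-s)$, where $\mu(n-s)$ is the mean of $\sum_i z_i$. The upper tail Chernoff bound with $\epsilon=(\rho-\mu)/\mu$ gives probability at most $\exp\big(-(\rho-\mu)^2(n-s)/(3\mu)\big)$. This is at most $\delta/4$ exactly under \eqref{eq:cps}.

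\emph{Part (ii).} Uniform habitats over all graphs correspond to $p=1/2$, so $\E z_i=s/2$.
\begin{itemize}
\item If $\rho>s/2$: apply the upper tail Chernoff bound to $\sum_i z_i$, with mean $(n-s)s/2$ and $\epsilon=(2\rho-s)/s$. The probability of being deadly is at most $\exp\big(-(2\rho-s)^2(n-s)/(6s)\big)$. This is at most $\delta$ once $n-s\geq \frac{6s}{(s-2\rho)^2}\log\frac1\delta$.
\item If $\rho<s/2$: the habitat fails to be deadly only if some $z_i\leq\rho=(1-\epsilon)s/2$ with $\epsilon=(s-2\rho)/s$. The lower tail Chernoff bound and a union bound over the $n-s$ non-sink vertices give probability at most $(n-s)\exp\big(-(s-2\rho)^2/(4s)\big)$. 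This is at most $\delta$ when $n-s\leq \delta e^{(s-2\rho)^2/4s}$.
\end{itemize}
The boundary case $\rho=s/2$ is vacuous or degenerate and is treated by convention.

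\emph{Main obstacle.} The only non-routine step is the transfer from $\G(n,p)$ to $\G(n,m)$ in part (i). It needs the monotonicity of the survival condition (via the positive semidefinite ordering under edge addition) and the median bound for the binomial edge count. The precise constant in $\log(4/\delta)$ depends on how much slack is spent there. Everything else is a direct application of the sandwich $\min_i z_i\leq\mS{\xi}\leq\overline z$.
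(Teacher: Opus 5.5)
Your proof follows the paper's route step for step. Both use the deterministic sandwich $\min_i z_i\le\xi\le\frac{1}{n-s}\sum_i z_i$, monotonicity of ``deadly'' via positive semidefinite increments under edge addition, a transfer from $\G(n,m)$ to $\G(n,p)$, the upper-tail Chernoff bound for $\sum_i z_i$, and in (ii) the lower-tail bound with a union bound over the non-sink vertices. (The reduction from a uniformly chosen sink set to $[s]$ is by permutation invariance, which you leave implicit.) The only variation is the transfer. You condition $\G(n,p_\epsilon)$ on its edge count and use the binomial median to get $\Pm(e(G)\ge m)\ge 1/2$. The paper instead compares with $p=m/\binom n2$ via $\Pm(X\ge \E X)\ge 1/4$ and then uses monotonicity in $p$. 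Both work, and yours even yields a deadly fraction of at most $\delta/2$.

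There is, however, a step that fails, and you inherit it from the paper. The bound $\Pm(X\ge(1+\epsilon)\nu p)\le e^{-\epsilon^2\nu p/3}$ recalled in Section 2 holds only for bounded $\epsilon$ (e.g.\ $0<\epsilon\le 3/2$). For general $\epsilon>0$ the exponent must be $\epsilon^2\nu p/(2+\epsilon)$. In part (i) the Chernoff parameter is the theorem's own $\epsilon$, which is arbitrary, and for large $\epsilon$ the statement itself is false.

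Take $s=999$, $n=1000$, $\rho=0.9$, $\epsilon=29$ (so $\mu=0.03$), $\delta=4e^{-8}\approx 1.3\cdot 10^{-3}$ and $m=15=p_\epsilon\binom{n}{2}$. Then $\frac{3\mu}{(\rho-\mu)^2}\log\frac4\delta\approx 0.95$, so both hypotheses hold. With a single non-sink vertex $v$ one has $\xi=d(v)$, so a habitat is deadly iff one of the $15$ edges meets $v$. This happens for a fraction $1-\prod_{j=0}^{14}\frac{498501-j}{499500-j}\approx 0.03$ of the habitats, more than twenty times $\delta$.

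To repair (i), either restrict to $\epsilon\le 1$, where your argument (like the paper's) is correct, or use the general bound. The latter turns the hypothesis into $n\ge s+\frac{\rho+\mu}{(\rho-\mu)^2}\log\frac4\delta$. In (ii) the issue is harmless:
\begin{itemize}
\item if $\rho\ge s$, every habitat is healthy, since $\xi\le\frac1{n-s}\sum_i z_i\le s$;
\item if $s/2<\rho<s$, then $\epsilon=(2\rho-s)/s<1$;
\item the lower-tail bound is only used with $\epsilon=(s-2\rho)/s\in(0,1)$.
\end{itemize}
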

\begin{figure}[!h]
	\includegraphics[scale=0.80]{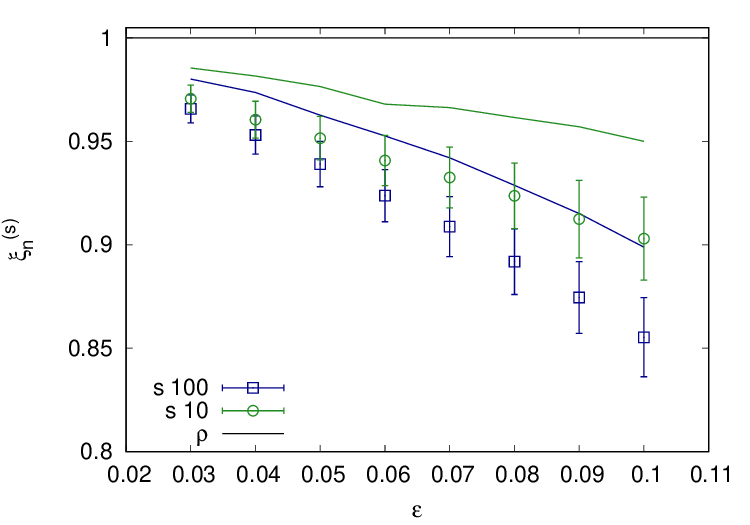}
	\caption{Numerical computation of $\ma{\xi}$ vs $\epsilon$ for
	$\delta=10^{-2}$, $\rho=1$, and $s=10$ (dark green) and $s=100$
	(dark blue).  The plot shows the mean and the standard
	deviation of $\ma{\xi}$ computed over 1000 random habitats
	chosen according to \eqref{eq:cps} (points with error bars),
	along with the $0.01$ right-tail percentile of the sample
	distrution (full lines with the corresponding colors).}  \label{fig:2}
\end{figure}
Theorem \ref{thm:appl} provides a static counterpart to Theorem \ref{thm:main00}, in that it quantifies the typical health of a uniformly sampled habitat of size $n$ instead of describing the behavior of $\xi^{S_n}(G_n)$ along a random growth process. Besides the fact that that the survival condition defines a monotone graph property, the proof of the theorem uses only the standard comparison between $\G(n,p_n)$ and $\G_{n,m}$. Yet, the result offers a clean ecological interpretation, stating that sufficiently large habitats are almost surely healthy, while sufficiently small ones are almost surely deadly.

For instance, when $\delta=10^{-2}$, $\rho=1$ and $s=10$, according to
Theorem~\ref{thm:appl}.\ref{com:ia}, for $\epsilon=0.1$ at least 99\%
of the habitats with at least 1988 vertices are healthy, while for
$\epsilon=0.03$ the same fraction of the habitats is healthy with a
much higher number of vertices, at least 20581.  However, as shown in
Fig.~\ref{fig:2}, such bounds become sharp only in the asymptotic
regime of very large $n$ (i.e. for small $\epsilon$), not easily
accessible to numerical simulation.

To the best of our knowledge, this paper is the first systematic
attempt to give a general framework to address the critical patch 
size problem in graphs.

\section{Proofs}
We preliminary need a couples of lemmas. 
\begin{lemma}\label{lem:easy}
	Let $(G,S)$ be a habitat where $G$ has $n$ vertices and $S$ has $s$ sinks. For $i\not\in S$, let $z_i$ and $\widetilde{d}(i)$ be defined as in \eqref{eq:degree_dec}. Let $\mS{\delta}(G)$ be the number of edges of $G$ with exactly one end-vertex in $S$. Set $\mS{\zeta}=\min_{i\not\in S}z_i$ and $\mS{\vartheta}(G)=\mS{\delta}(G)/(n-s)$. Then, the Dirichlet eigenvalue $\mS{\xi}$ of $(G,S)$ satisfies the inequalities $\mS{\zeta}\leq\mS{\xi}(G)\leq \mS{\vartheta}(G)$ and both the inequalities are sharp. 	Furthermore, if $G$ is connected, then $\mS{L}(G)$ is positive definite. As a consequence, if $\ma{\zeta}$ and $\ma{\vartheta}$ are the random variable described by $\mS{\zeta(G)}$ and $\mS{\vartheta(G)}$, respectively, when $(G,S)$ is sampled from a standard random habitat with $s_n$ sinks, then the bounds stated above hold almost surely under $\Pm$ with the latter symbols replaced by the former ones.
\end{lemma}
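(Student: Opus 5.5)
The argument rests on the decomposition \eqref{eq:identity}, $\mS{L}(G)=Z+\mS{\widetilde{L}}(G)$, and on the Rayleigh quotient characterization
$$\mS{\xi}(G)=\min_{x\neq 0}\frac{x^{T}\mS{L}(G)x}{x^Tx}=\min_{x\neq 0}\frac{\sum_{i\not\in S}z_ix_i^2+\sum_{\{i,j\}\subseteq[n]\setminus S,\ i\sim j}(x_i-x_j)^2}{\sum_{i\not\in S} x_i^2}.$$

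For the lower bound, we will use that $\mS{\widetilde{L}}(G)$ is a graph Laplacian and hence positive semidefinite. This gives $x^T\mS{L}x\geq\sum_i z_ix_i^2\geq\mS{\zeta}\,x^Tx$, so $\mS{\xi}\geq\mS{\zeta}$.

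For the upper bound, we will plug the all-ones vector $\mathbf{1}$ of length $n-s$ into the quotient. The Laplacian part vanishes, and the quotient becomes $\sum_{i\not\in S}z_i/(n-s)$. Each edge with exactly one end in $S$ is counted exactly once in this sum, so the sum equals $\mS{\delta}(G)$. Hence $\mS{\xi}\leq\mS{\vartheta}$.

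Sharpness will be shown by a single example realizing both equalities simultaneously. We will take $G$ to be any graph in which every non-sink vertex has the same number $z$ of neighbours in $S$. For instance, $G=K_n$, or $G$ with no edges between $S$ and the rest, where $\zeta=\vartheta=0$. In such a graph $\mS{\zeta}=z=\mS{\vartheta}$, so both inequalities are equalities. Equivalently, $Z=z\,\mathsf{id}$ commutes with $\mS{\widetilde L}$ and $\mathbf 1$ is an eigenvector.

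For positive definiteness when $G$ is connected (and $S\neq\emptyset$), we will suppose $x^T\mS{L}x=0$ for some $x\neq0$. Every term in the quotient is then zero. So $x$ is constant on each connected component of $\widetilde G$, and $x_i=0$ whenever $z_i>0$. Since $G$ is connected, every component of $\widetilde G$ contains a vertex adjacent to $S$. Therefore $x$ vanishes on every component, a contradiction. This is the only step requiring a small combinatorial argument, and the main care needed is to note the implicit hypothesis $S\neq\emptyset$; with $S=\emptyset$ the matrix is the ordinary Laplacian, which is singular.

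Finally, the inequalities hold deterministically for every pair $(G,S)$. They therefore hold pointwise on $\Omega$ for the random variables $\ma{\zeta},\ma{\xi},\ma{\vartheta}$ of a standard random habitat, and in particular almost surely under $\Pm$.
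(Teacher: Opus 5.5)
Your proof is correct, and for the two inequalities it follows the paper's route: positive semidefiniteness of the Laplacian of $\widetilde G$ gives the lower bound, and the all-ones test vector in the Rayleigh quotient gives the upper bound. You depart from the paper in two places.

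\emph{Sharpness.} You use only the constant-$z_i$ family, which is the paper's witness for the upper bound. The paper also witnesses the lower bound separately, with a graph whose minimizing vertex $\kappa$ is isolated in $\widetilde G$, so that $e_\kappa^\top L^{(S)} e_\kappa = z_\kappa = \zeta^{(S)}$. Your witness meets the literal claim, but only where $\zeta^{(S)}=\vartheta^{(S)}$ and the sandwich collapses; your edgeless-cut example is even $0\le 0\le 0$. The localization example shows the lower bound is attained while $\vartheta^{(S)}$ is arbitrarily larger, which is the stronger optimality Remark~\ref{rem:fund} relies on. For the upper bound the collapse is unavoidable: $\xi^{(S)}=\vartheta^{(S)}$ makes $\mathbf 1$ a minimizing eigenvector, which forces $Z\mathbf 1=\vartheta^{(S)}\mathbf 1$, i.e.\ all $z_i$ equal.

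\emph{Positive definiteness.} The paper cites Taussky's theorem on irreducibly diagonally dominant matrices, while you argue directly on the null vectors of the quadratic form, one component of $\widetilde G$ at a time. Your argument is self-contained and in fact more careful. $L^{(S)}$ is irreducible only when $\widetilde G$ is connected, and that does not follow from connectivity of $G$: a path on three vertices whose middle vertex is the only sink gives $L^{(S)}=I_2$. So the paper's citation has to be applied blockwise, which is exactly what your componentwise argument does. Your remark that $S\neq\emptyset$ is needed is also correct.
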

\begin{proof} The Laplacian $L(G)$ of any graph $G$, is always 
positive semi-definite and has not full rank
(\cite{royle2001algebraic}): if $\boldsymbol{e}$ is the all ones
vector of appropriate dimension, then
$L(G)\boldsymbol{e}=\boldsymbol{0}$. For ease of notation, throughout
the proof we omit the reference to $G$. The identities
$\lmin\big(\mS{L}\big)=\lmin\big(\mS{D}-\mS{A}\big)=\lmin\big(Z+\mS{\widetilde{L}}\big)$
follows by \eqref{eq:identity} where all of the symbols retain the
meaning there defined. Therefore
	\[
	\lmin\big(\mS{L}\big)\geq \lmin\big(Z\big)+\lmin\big(\mS{\widetilde{L}}\big)= \lmin\big(Z\big)
	\]
	where the last equality holds because $\mS{\widetilde{L}}$ has not full rank. The lower bound follows once we observe that $\lmin(Z)=\min_{i\in [n]\setminus S}z_i=\mS{\zeta}$. To prove the sharpness of the bound $\mS{\zeta}\leq\mS{\xi}(G)$, it suffices to exhibit a graph whose Dirichlet eigenvalues attains equality. Let $G$ be a graph such that the vertex $\kappa$ attaining the minimum of $z_i$ over $[n]\setminus S$ is isolated in $\widetilde{G}$. Hence $d_{\widetilde{G}}(\kappa)=0$ and $d_G(\kappa)=z_\kappa=\mS{\zeta}$. Let $\boldsymbol{e}_{\kappa}$ be the ${\kappa}$-th fundamental basis vector defined by  $\boldsymbol{e}_{\kappa}(i)=1$ if $i=\kappa$ and $\boldsymbol{e}_{\kappa}(i)=0$ if $i\not=0$.
	Observe that 
	$$\boldsymbol{e}_\kappa^\top\mS{\widetilde{L}}\boldsymbol{e}_\kappa=d_{\widetilde{G}}(\kappa)=0.$$
	Hence
	$$\boldsymbol{e}_\kappa^\top\mS{L}\boldsymbol{e}_\kappa=\boldsymbol{e}_\kappa^\top \left(Z+\mS{\widetilde{L}}\right)\boldsymbol{e}_\kappa=\boldsymbol{e}_\kappa^\top Z{e}_\kappa=z_\kappa= \mS{\zeta}.$$
	Since in any real symmetric matrix $B$, the smallest eigenvalue of $B$ is the minimum value of the Rayleigh quotient of $B$, namely the minimum of the quadratic form $Q(x)=x^\top B x$ over the sphere $\{\|x\|=1\}$, it follows that
	\[
	\mS{\zeta}\leq \mS{\xi}(G)=\min_{u\not=0}\mydfrac{u^\top \mS{L} u}{\|u\|^2}\leq \boldsymbol{e}_\kappa^\top\mS{L}\boldsymbol{e}_\kappa=\mS{\zeta}
	\] 
	which proves that, for the graph $G$ we have constructed, $\mS{\zeta}=\mS{\xi}(G)$ as required.
	
	To establish the upper bound, observe that $\mS{\delta}=\sum_{i\in [n]\setminus S}z_i$. Indeed, an edge of $G$ contributes to the sum if and only it has exactly one end-vertex in $[n]\setminus S$ and, hence, exactly one end-vertex in $S$. Therefore 
	$$\mS{\delta}=\sum_{i\in [n]\setminus S}z_i=\boldsymbol{e}^\top Z\boldsymbol{e}=\boldsymbol{e}^\top\left(\mS{L}-\mS{\widetilde{L}}\right)\boldsymbol{e}=\boldsymbol{e}^\top \mS{L}\boldsymbol{e}.$$
	Therefore, after observing that $\|\boldsymbol{e}\|^2=n-s$, it follows that
	\[
	\begin{split}
		\lmin(\mS{L})=\min_{u\not=0}\mydfrac{u^\top \mS{L} u}{\|u\|^2}\leq \mydfrac{\boldsymbol{e}^\top \mS{L}\boldsymbol{e}}{\|\boldsymbol{e}\|^2}=\mydfrac{\mS{\delta}}{n-s}
	\end{split} 	
	\]
	as claimed. As above, to prove the sharpness of the bound $\mS{\xi}(G)\leq \mS{\vartheta}(G)$, it suffices to exhibit a graph whose Dirichlet eigenvalues attains equality. Let $G$ be a graph such that, for some positive integer $\theta$ such that $\theta\leq s$, $z_i=\theta$ for $i\in [n]\setminus S$. It is clear that such a graph exists: for each vertex in $i\in [n]\setminus S$ connect $i$ to a arbitrarily chosen subset of $S$ of cardinality $\theta$. For such a graph $\mS{\vartheta}(G)=\theta$. On the other hand, $\mS{L}(G)=\theta I+\mS{\widetilde{L}}(G)$ where $I$ is the identity matrix of order $n-s$. Since $\mS{L}(S)$ is a diagonal update of $\mS{\widetilde{L}}(S)$ by a scalar matrix, the spectrum of $\mS{L}$ is shifted by $a$ with respect to the spectrum of $\mS{\widetilde{L}}$. Since $\mS{\widetilde{L}}$ is a Laplacian matrix, its minimum eigenvalue is zero. Therefore the minimum eigenvalue of $\mS{L}$ is $\theta$ so that,
	$$\theta=\mS{\xi}(G)\leq \mS{\vartheta}(G)=\theta$$
	which proves that, for the graph $G$ we have constructed, $\mS{\xi}(G)=\mS{\vartheta}(G)$ as required.
	To prove the last part of the lemma, it suffices to observe that if $G$ is connected, then $\mS{L}$ is an irreducible diagonally dominant matrix (see \cite{varga1962iterative} for this notion) and that, as consequence of Gershgorin circle theorem, every symmetric and irreducible diagonally dominant matrix is positive definite (Corollary to Theorem 1.8 in \cite{varga1962iterative}).
\end{proof}
\begin{remark}\label{rem:fund}
	As innocent and folklore as it may seem, Lemma \ref{lem:easy} provides the optimal deterministic range for the Dirichlet eigenvalue in a precise sense. Without imposing further structural assumptions, such as conditions on the internal graph $\widetilde{G}$, the bounds $\mS{\zeta}\leq\mS{\xi}(G)\leq \mS{\vartheta}(G)$ cannot be improved: both extremes can be attained by suitable families of graphs---via full localization of the Rayleigh minimizer on a single vertex of $[n]\setminus S$ or by taking a graph with constant degree sequence on $[n]\setminus S$---and no sharper universal estimate is possible without loosing the generality of the random model we have chosen.
\end{remark}

\begin{lemma}\label{lem:degree_as} 
	If assumptions \ref{com:a} or \ref{com:b} hold, then $\ma{\vartheta}/s_np_n\xrightarrow{a.s.} 1$. If assumption \ref{com:b} holds, then $\ma{\zeta}/s_np_n\xrightarrow{a.s.} 1$.  
\end{lemma}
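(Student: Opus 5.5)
We prove the two assertions separately. Both use the Chernoff bounds recalled above together with the first Borel--Cantelli lemma. The plan is to show that, for every fixed $\epsilon>0$, the probabilities of the bad events are summable in $n$; almost sure convergence then follows by taking $\epsilon=1/k$ along $k\in\mathbb{N}$.

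\emph{The variable $\ma{\vartheta}$.} In the standard random habitat, $\ma{\delta}=\sum_{i>s_n}z_i$ counts the pairs $\{i,j\}$ with $j\le s_n<i$ that are edges. These are $\nu_n=s_n(n-s_n)$ distinct pairs, each present independently with probability $p_n$, so $\ma{\delta}\sim\bin{\nu_n}{p_n}$. Since $\ma{\vartheta}=\ma{\delta}/(n-s_n)$, the symmetric Chernoff bound gives
\[
\Pm\big(|\ma{\vartheta}-s_np_n|\ge\epsilon s_np_n\big)\le 2e^{-\epsilon^2 s_n(n-s_n)p_n/3}.
\]

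Under assumption \ref{com:a}, eventually $(n-s_n)p_n\ge (6/\epsilon^2)\log n$. Because $s_n\ge1$, the bound is then at most $2n^{-2}$.

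Under assumption \ref{com:b}, eventually $s_np_n\ge(6/\epsilon^2)\log n$. Moreover $\lim s_n/n<1$ gives $n-s_n\ge cn\ge1$ for some $c>0$, so the bound is again at most $2n^{-2}$.

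In either case the bounds are summable, and Borel--Cantelli yields $\ma{\vartheta}/s_np_n\xrightarrow{a.s.}1$.

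\emph{The variable $\ma{\zeta}$.} For each $i>s_n$ we have $z_i\sim\bin{s_n}{p_n}$. The event $\{|\ma{\zeta}-s_np_n|\ge\epsilon s_np_n\}$ is contained in two events:
\begin{itemize}
\item some $z_i\le(1-\epsilon)s_np_n$, or
\item all $z_i\ge(1+\epsilon)s_np_n$, and in particular $z_{n}\ge(1+\epsilon)s_np_n$.
\end{itemize}
A union bound over the $n-s_n\le n$ non-sink vertices, together with the lower and upper tail Chernoff bounds, gives
\[
\Pm\big(|\ma{\zeta}-s_np_n|\ge\epsilon s_np_n\big)\le n\,e^{-\epsilon^2 s_np_n/2}+e^{-\epsilon^2 s_np_n/3}.
\]
Under \ref{com:b}, eventually $s_np_n\ge(6/\epsilon^2)\log n$. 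The right-hand side is then at most $n^{-2}+n^{-2}$, which is summable, and Borel--Cantelli concludes $\ma{\zeta}/s_np_n\xrightarrow{a.s.}1$.

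For the upper side one can avoid the second event entirely: the deterministic inequality $\ma{\zeta}\le\ma{\vartheta}$ (a minimum is at most the average) combined with the first part already gives $\limsup_n\ma{\zeta}/s_np_n\le1$ almost surely.

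\emph{Main obstacle.} The delicate point is the union bound over the roughly $n$ vertices in the lower tail for $\ma{\zeta}$. This factor of $n$ is exactly what forces the growth condition $\log n\ll s_np_n$, because $\epsilon^2 s_np_n/2$ must eventually exceed $(2+\eta)\log n$ for every fixed $\epsilon$. This is also why only assumption \ref{com:b}, and not \ref{com:a}, yields convergence of $\ma{\zeta}$.

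A minor technical point is that the graphs for different $n$ live on a common product space, so the events are not nested. This is harmless, since the first Borel--Cantelli lemma requires no independence.
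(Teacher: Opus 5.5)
Your proof is correct and follows essentially the same route as the paper. Both arguments apply the symmetric Chernoff bound to $(n-s_n)\vartheta^{(s)}_n\sim{\rm bin}(s_n(n-s_n),p_n)$, and apply Chernoff bounds with a union bound over the non-sink vertices to the $z_i\sim{\rm bin}(s_n,p_n)$. This gives $O(n^{-2})$ bad-event probabilities, and Borel--Cantelli finishes. The only difference is cosmetic: you split the two tails of $\zeta^{(s)}_n$ and use the union bound only for the lower tail, whereas the paper applies the symmetric bound inside the union bound for both tails.
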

\begin{proof} For ease of notation, let $s=s_n$, $p=p_n$ and $\nn=n-s$. For random variables $X$ and $Y$ write $X\sim \ber(p)$ to mean that $X$ is a Bernoulli random variable with mean $p$ and $Y\sim \bin{n}{p}$ to mean that $Y$ has a the binomial distribution with parameters $n$ ad $p$, which is the distribution of the sum of $n$ independent Bernoulli random variables having the same mean $p$. Let $\ap{e_{i,j}}{n}$ be the indicator of the event that vertices $i$ and $j$ are adjacent in the random graph. Hence $\ap{e_{i,j}}{n}\sim\ber(p)$ and $\nn\ma{\vartheta}=\sum_i^s\sum_{j=s+1}^n\ap{e_{i,j}}{n}$ with the $\ap{e_{i,j}}{n}$ being independent. Therefore $\nn\ma{\vartheta}\sim \bin{s\nn}{p}$ under $\Pm$. 
	For $i=s+1,\ldots, n$, let $\ap{Z_{i,n}}{s}$ be defined by $\sum_{j=1}^s\ap{e_{i,j}}{n}$. Hence $\ma{\zeta}=\min_{i=s+1,\ldots,n}\ap{Z_{i,n}}{s}$ where $\ap{Z_{i,n}}{s}\sim \bin{s}{p}$. Let 
	$$B_{1,n}(\epsilon)=\big\{\omega\in\Omega : |\ma{\vartheta}(\omega)-sp|\geq \epsilon sp\big\}\,\text{and}\,B_{2,n}(\epsilon)=\big\{\omega\in\Omega : |\ma{\zeta}(\omega)-(n-1)p|\geq \epsilon sp\big\}.$$
	By a standard application of the Borel-Cantelli Lemma (see, e.g., \cite{bremaud2024introduction}, Chapter 6), if, for every $\epsilon>0$, we can prove that for $h=1$ and $h=2$, $\sum_{n\geq 1}\Pm\left(B_{h,n}(\epsilon)\right)$ if finite, under, respectively, assumptions \ref{com:a} or \ref{com:b}, and \ref{com:b}, then we have proved the lemma. To prove that $\sum_{n\geq 1}\Pm\left(B_{h,n}(\epsilon)\right)$ is finite for $h=1,2$, we show that $\Pm\left(B_{h,n}(\epsilon)\right)$ is $O(n^{-2})$. An application of the symmetric Chernoff bound to $\nn\ma{\vartheta}$ yields
	\[
	\Pm\big(B_{1,n}(\epsilon)\big)=\Pm\big(|\ma{\vartheta}(\omega)-sp|\geq \epsilon sp\big)=\Pm\big(|\nn\ma{\vartheta}(\omega)-\nn sp|\geq \epsilon \nn sp\big)\leq 2e^{-\epsilon^2\nn s p/3}.
	\]
	Assumptions \ref{com:a} and \ref{com:b} both imply that $\nn s p\gg \log{n}$. Hence, for $n$ sufficiently large, it holds that $\nn s p\geq \frac{6}{\epsilon^2}\log{n}$ implying that $\Pm\left(B_{1,n}(\epsilon)\right)\leq 2e^{-\epsilon^2\nn s p/3}\leq 2e^{-2\log{n}}=2/n^{2}$. This establishes the first almost sure limit. To establish the other one, observe that $B_{2,n}(\epsilon)\subseteq \cup_{s+1}^nF_i(\epsilon)$ where, for $i=s+1,\ldots,n$, $F_i(\epsilon):=\big\{\omega\in\Omega : |\ap{Z_{i,n}}{s}(\omega)-sp|\geq \epsilon s p\big\}$.
	Therefore, since $\ap{Z_{i,n}}{s}$ is distributed as $Z$ where $Z\sim\bin{s}{p}$, it follows that
	$\Pm\left(B_{2,n}(\epsilon)\right)\leq \nn\Pm\left(|Z-s p|\geq \epsilon s p\right)$, 
	where the factor $\nn$ is due to the Union Bound. An application of the symmetric Chernoff bound to $Z$ yields 
	$$\Pm\left(B_{2,n}(\epsilon)\right)\leq \nn\Pm\left(|Z-s p|\geq \epsilon s p\right) \leq 2\nn e^{-\epsilon^2 s p/3}.$$
	By assumption \ref{com:b}, if $n$ is sufficiently large, then $sp\geq \frac{9}{\epsilon^2}\log{n}$. Hence $\Pm\left(B_{3,n}(\epsilon)\right)\leq 2\nn e^{-3\log{n}}\leq 2n e^{-3\log{n}}\leq 2/n^{2}$. This finishes the proof of the lemma. 
\end{proof}
\begin{theoremproof}{\ref{thm:main00}}
	Let us prove statement \ref{com:0i} first. Let $F_n$ be the event that in $H_n$ the the underlying random graph is connected. By Lemma \ref{lem:easy}, event $F_n$ implies the event that $\ma{L}$ is positive definite. Hence $\Pm(F_n)\leq \Pm(\ma{\xi}>0)\leq 1$. Since in the connected regime $\lim_{n\rightarrow\infty}\Pm(F_n)=1$ (see, e.g., \cite{frieze2015introduction}), we conclude $\lim_{n\rightarrow\infty}\Pm(\ma{\xi}>0)=1$ as well. This proves the only if part. Conversely, if $p_n$ is below the threshold, then consider the sequence formed by the random graphs induced by the non-sink vertices. Since the event that the graphs in the sequence contain a isolated vertex is an asymptotically almost sure event (see, e.g., \cite{janson2011random}), it follows that the event that $\ma{L}$ has an all zero row is also asymptotically almost sure and so is the event that $\ma{L}$ is singular and the proof of the statement is complete. 
	
	As for statement \ref{com:0ii}, observe that by Lemma \ref{lem:easy}, it almost surely holds that 
	$\ma{\zeta}\leq \ma{\xi}\leq\ma{\vartheta}$. On the other hand, by Lemma \ref{lem:degree_as}, both $\ma{\zeta}/sp_n$ and $\ma{\vartheta}/sp_n$ converge to $1$ almost surely under assumption \ref{com:b}. Hence $\ma{\xi}/sp_n\xrightarrow{a.s.}1$ follows by squeezing. The second part follows straightforwardly from the almost sure inequality $\ma{\xi}\leq \ma{\vartheta}$ and the almost sure limit $\ma{\vartheta}/s_np_n\xrightarrow{a.s.}1$ under assumptions \ref{com:a} or \ref{com:b}. These two facts imply
	\[
	\begin{split}
		1\geq\Pm\left(\limsup_n\mydfrac{\ma{\xi}}{s_np_n}\leq 1\right)\geq \Pm\left(\limsup_n\mydfrac{\ma{\vartheta}}{s_np_n}\leq 1\right)=\Pm\left(\lim_n\mydfrac{\ma{\vartheta}}{s_np_n}\leq 1\right)=1.
	\end{split}
	\]
	The proof of the theorem is now complete.
\end{theoremproof}
\begin{remark}\label{rem:cavolo}
	For a standard random habitat, let $\ap{d_{n,\min}}{s}$ be the minimum among the degrees of the non-sink vertices. Since $\lmin\big(\ma{D}-\ma{A}\big)\geq \ap{d_{n,\min}}{s}-\lmax\big(\ma{A}\big)$, it almost surely holds that $\ma{\xi}\geq \ap{d_{n,\min}}{s}-\lmax\big(\ma{A}\big)$. Since, reasoning as in Lemma \ref{lem:degree_as}, it can be shown $\ap{d_{n,\min}}{s}=(1+o(1))(n-1)p_n$ and since $\lmax\big(\ma{A}\big)=(1+o(1))(n-s_n-1)p_n$ (see \cite{Chungetal}), it follows that $\ma{\xi}=s_np_n-\kappa(n)$ where  $\kappa(n)\ll np_n$.
\end{remark}

\begin{remark}\label{rem:expected}
	By the results in \cite{oliveira2009concentration}, it can be shown that $s_np_n=\lmin\big(\E\big(\ma{L}\big)\big)$, where $\E\big(\ma{L}\big)$ is the expectation under $\Pm$ of the random matrix $\ma{L}$. The map which takes a symmetric matrix to its smallest eigenvalue is a concave map on the set of real symmetric matrix. Hence, by Jensen inequality, $\E\big(\ma{\xi}\big)\leq s_np_n$.  
\end{remark}

\begin{theoremproof}{\ref{thm:appl}}
Let $n$ and $m$ be positive integers and $\GG_{n,m}$ be the set of graphs on $[n]$ with $m$ edges. Let $\mt{\mathcal{H}}{s}{n,m}$ be the set of habitats $(G,S)$ such that $G\in\GG_{n,m}$ and $S$ consists of $s$ sinks. Likewise, $\mt{\mathcal{H}}{s}{n}$ is the set of habitats $(G,S)$ such that $G\in\GG_n$ and $S$ consists of $s$ sinks. Also recall that a standard habitat is a habitat $(G,[s])$. Let $\ap{\mathcal{H}}{s}(S)$ be the subset of $\mt{\mathcal{H}}{s}{n,m}$ ($\mt{\mathcal{H}}{s}{n}$, resp.) consisting of the habitats whose set of sinks is $S$. It is clear that $\ap{\mathcal{H}}{s}(S)$ and $\ap{\mathcal{H}}{s}([s])$ have the same number of elements. Denote by $\ap{L}{s}(G)$ the principal submatrix of $L(G)$ induced by $\{s+1,\ldots,n\}$. 
\begin{claimproof}{\ref{com:ia}}
	Let $\mt{\Pm}{s}{n,m}$ be the uniform measure on $\mt{\mathcal{H}}{s}{n,m}$ and and let  $B(S)$ the event which occurs when a habitat sampled uniformly at random from $\mt{\mathcal{H}}{s}{n,m}$ is healthy and its set of sinks is precisely $S$. Also, let $B$ the event that a habitat sampled uniformly at random from $\mt{\mathcal{H}}{s}{n,m}$ is healthy. The thesis of the statement is then equivalent to $\mt{\Pm}{s}{n,m}(B)\geq 1-\delta$. By symmetry of the uniform model $\G_{n,m}$ under vertex permutations, the probability that a habitat $(G,S)$ is healthy depends only on $|S|=s$ and not on the specific choice of $S$. Indeed, for any two sets $S,\,S'\subseteq [n]$ with $|S|=|S'|=s$, there is a permutation sending $S$ to $S'$ that takes $L^{(S)}(G)$ to a similar matrix $L^{(S')}(G')$, hence preserving the probability of $B(S)$. Thus all events $B(S)$ have the same probability. In particular, for $S=[s]$, it holds that $\mt{\Pm}{s}{n,m}(B)=\mt{\Pm}{s}{n,m}(B([s]))=\Pm_{n,m}\big(\ap{L}{s}(G)\leq \rho\big)$,
	so that, to prove the theorem, it suffices to prove that
	\begin{equation}\label{eq:lastfund}
		\Pm_{n,m}\big(\ap{L}{s}(G)\leq \rho\big)\geq 1-\delta.
	\end{equation}
	Since, as we shall show below, the event $\big\{G\in \GG_n \ |\ \lmin\big(\ap{L(G)}{s}\big)\geq \rho\big\}$ defines a monotone increasing graph property $\prd^*_n$, we may apply the standard comparison between $\G(n,p_n)$ and $\G_{n,m}$ (see e.g.. Lemma 1.3 in \cite{frieze2015introduction} or Proposition 1.12 in \cite{janson2011random}). In particular, with $p=m/{n\choose 2}$, it holds that 
	\begin{equation}\label{eq:llfund}
	\Pm_{n,m}\left(G\in \prd_n^*\right)\leq 4\Pm_{n,p}\left(G\in \prd_n^*\right)\leq 4\Pm_{n,p_\epsilon}\left(G\in \prd_n^*\right)	
\end{equation}
where the constant $4$ comes from the lower bound $\Pm(X\geq m)\geq 1/4$ valid for a binomial random variable $X$  whenever $m\leq \Ea{X}$ (see \cite{greenberg2014tight}) while the last inequality is due to the hypothesis $m\leq p_\epsilon {n\choose 2}$ upon recalling that $\Pm_{n,p}\big(G\in \prd_n^*\big)$ is not decreasing in $p$ (see again \cite{frieze2015introduction,janson2011random}). 

Consider now the property $\prd^*_n=\big\{G\in \GG_n \ |\ \lmin\big(\ap{L(G)}{s}\big)\geq \rho\big\}$. We claim that $\prd^*_n$ is a monotone graph property. It suffices to show that if $G'$ has one edge more than $G$, then $\lmin\big(\ap{L}{s}(G')\big)\geq\lmin\big(\ap{L}{s}(G)\big)$. Let $e$ be the extra edge in $G'$. Suppose that $e$ connects the distinct vertices $i$ and $j$. If $\{i,j\}\subseteq\{1,\ldots,s\}$, then $\lmin\big(\ap{L}{s}(G')\big)=\lmin\big(\ap{L}{s}(G)\big)$ because $\ap{L}{s}(G')=\ap{L}{s}(G)$ in this case. If either $i$ or $j$ belongs to $\{1,\ldots,s\}$, then $\lmin\big(\ap{L}{s}(G')\big)=\lmin\big(\ap{L}{s}(G)+\ell_\kappa\big)$, where $\ell_\kappa$ is an order $(n-s)$ diagonal matrix all whose entries are zero except for the $\kappa$-th diagonal one which is 1, where $\kappa\in\{i,j\}$. If $\{i,j\}\subseteq [n]\setminus \{1,\ldots,s\}$, then $\lmin\big(\ap{L}{s}(G')\big)=\lmin\big(\ap{L}{s}(G)+\ell_{i,j}\big)$, where $\ell_{i,j}$ is the graph Laplacian of the graph $(\{s+1,\ldots,n\},\{e\})$. Since both $\ell_\kappa$ and $\ell_{i,j}$ are semi-definite positive matrices, in any case it holds that $\lmin\big(\ap{L}{s}(G')\big)\geq\lmin\big(\ap{L}{s}(G)\big)$. Hence $\prd^*_n$ is monotone as claimed. Recall now that by Lemma \ref{lem:degree_as}, $\nn\ma{\vartheta}\sim \bin{\nn s}{p_\epsilon}$, where $\tilde{n}=n-s$, and by Lemma \ref{lem:easy}, it almost surely holds that $\ma{\xi} \leq \ma{\vartheta}$. Hence
	\[
	\begin{split}
	\Pm_{n,p_\epsilon}\left(G\in \prd^*_n\right)&=\Pm_{n,p_\epsilon}\left(\ma{\xi}\geq \rho\right)=\Pm_{n,p_\epsilon}\left(\ma{\xi}\geq (1+\epsilon)\mu\right)\\
		&\leq \Pm_{n,p_\epsilon}(\nn\ma{\vartheta}\geq (1+\epsilon)\mu \tilde{n})\leq \exp\left(-\mydfrac{\epsilon^2\mu \tilde{n}}{3}\right)\leq \mydfrac{\delta}{4}	
	\end{split}
	\] 
	where the last but one inequality is the upper tail Chernoff bound, while the last inequality is granted by the assumption $\tilde{n}\geq \frac{3\mu}{(\rho-\mu)^2}\log{\frac{4}{\delta}}$.   
	The proof follows by \eqref{eq:lastfund} and \eqref{eq:llfund} after observing that  $\Pm_{n,m}\left(G\in \prd^*_n\right)=1-\Pm_{n,m}\left(\ap{L}{s}(G)\leq \rho\right)$.
\end{claimproof}
\begin{claimproof}{\ref{com:iia}}
Observe in the first place, that when $p=1/2$, the measure $\Pm_{n,\frac{1}{2}}$ assigns the same probability to the graphs in $\GG_n$. Therefore, $\Pm_{n,\frac{1}{2}}$ is the uniform measure on $\GG_n$. Borrowing the reasoning from the proof of Theorem \ref{thm:appl}, we see that the first of the two statements is equivalent to the statement 
$$\nn\geq\mydfrac{6s}{(s-2\rho)^2}\log{\frac{1}{\delta}}\Longrightarrow \Pm_{n,\frac{1}{2}}(\ma{\xi}\geq \rho)\leq \delta.$$
Observe that $\Pm_{n,\frac{1}{2}}(\ma{\xi}\geq \rho)\leq \Pm_{n,\frac{1}{2}}(\ma{\vartheta}\geq \rho)$ because $\ma{\xi} \leq \ma{\vartheta}$ holds almost surely by Lemma \ref{lem:degree_as}. Hence, after recalling that $\nn\ma{\vartheta}\sim\bin{\nn s}{1/2}$, the thesis follows by applying the upper tail Chernoff bound to $\nn\ma{\vartheta}$ with $\epsilon=\frac{2\rho-s}{s}$. Similarly, the second statement is equivalent to
$$\nn\leq \delta e^{(s-2\rho)^2/4s}\Longrightarrow \Pm_{n,\frac{1}{2}}(\ma{\xi}\leq \rho)\leq \delta.$$ 
Since by Lemma~\ref{lem:easy}, it almost surely holds that $\ma{\zeta} \leq \ma{\xi}$, by applying the Union Bound as in Lemma \ref{lem:degree_as}, yields $\Pm_{n,\frac{1}{2}}(\ma{\xi}\leq \rho)\leq \nn\Pm_{n,\frac{1}{2}}(\ma{\zeta}\leq \rho)$. The thesis now follows by applying the lower tail Chernoff bound to $\ma{\zeta}$ with $\epsilon=\frac{s-2\rho}{s}$ since $\ma{\zeta}\sim\bin{s}{1/2}$. 
\end{claimproof}
\noindent The Theorem is thus completely proved.
\end{theoremproof}

\section{Conclusion}
In this work we investigated the critical patch size problem in a
discrete setting, modeling habitats as random graphs with a subset of
vertices acting as sinks. We showed that population persistence is
governed by the Dirichlet eigenvalue of a Laplacian-type operator,
providing a natural discrete counterpart of classical
reaction-diffusion models with absorbing boundaries.

For sequences of Erd\"os-R\'enyi random habitats with extensive sink sets,
we established probabilistic bounds and proved an almost sure
asymptotic characterization of the principal Dirichlet eigenvalue in
the large-network regime. These results yield sharp spectral
thresholds determining population survival with high probability and
allow the identification of a minimal habitat size ensuring
persistence in random environments.

Several directions remain open, including the extension to structured
or correlated networks, different growth term, as in the Allee effect,
and non-asymptotic regimes relevant for finite-size habitats. These
questions may contribute to a broader understanding of persistence
phenomena in complex spatially structured systems.

\section*{Funding}
The author(s) declare that financial support was received for the
research, authorship, and/or publication of this article. The funding
for this study was partially provided by the grant PRIN 20223R9W7H
``Pre-clinical and clinical study on pediatric Traumatic Brain Injury
and intranasal Nerve Growth Factor: analysis on cortico-striatal
connectivity by network propagation modeling, neuronal tracing, and
chemogenetics'' and IAC-CNR project DIT.AD021.161 ``Analisi
probabilistica di dataset biologici e network dynamics''.
\bibliographystyle{plain}
\bibliography{biblio}
\end{document}